\newtheorem{theorem}{Theorem}[section]
\newtheorem{corollary}{Corollary}[theorem]
\newtheorem{lemma}[theorem]{Lemma}
\title{Prime-Bounded Subwords}
\author{O.M. Cain \\ onnomawcain@gmail.com}
\date{December 2019}
\begin{document}

\maketitle

\textbf{Abstract.} In the number $373$ all subwords ($3$, $7$, $37$, $73$, and $373$) are prime. Similarly, in $9719$ all subwords are \textit{divisible by} at most one prime. And similarly again in $7319797913$ all subwords are divisible by at most two primes. These are the largest integers with their respective properties. We show for any $k\ge 1$ there are only finitely many integers having subwords divisible by at most $k$ primes. In fact, we show for any $B$ and $d$ coprime that $n$ contains a base-$B$ subword divisible by $d$ if $n>B^d$. So as example consequence, past a certain point every prime contains a subword divisible by say $10000000007$.

\section{Examples}

The author thinks it best to start with appetizers before getting at the main course. The only integers with all subwords prime are 
$$2,\ 3,\ 5,\ 7,\ 23,\ 37,\ 53,\ 73,\ \text{and }373.\text{ (A085823 [1])}$$
We may loosen our restriction requiring instead the subwords to be \textit{divisible by} at most one prime (so for example, $1$, $8=2^3$, $25=5^2$, ... are now allowed). There are $56$ such numbers:
$$1,\ 2,\ 3,\ 4,\ 5,\ 7,\ 8,\ 9,\ 11,\ ...,\ 2719,\ 3137,\ 3797,\ \text{and }9719.$$

We may loosen our restriction again requiring the subwords to be divisble by at most \textit{two} distinct primes. There are $9993$ of these numbers. The largest three of which are
$$981319193,\ 4713191939,\text{ and }7319797913.$$
We can cook the same dish in other bases too. The largest ternary integers with subwords divisible by at most $1,2,3,...$ primes are:
$$71=2122|_3$$
$$19655=222221222|_3$$
$$387243341=222222122222222222|_3$$
$$...$$

\section{Strictly Prime Subwords or \textit{Substrimes}}

For our first course, we'll bound the integers whose subwords are primes. We should clarify what exactly we mean by ``subwords" before continuing. 

Given some integer with base-$B$ expansion $n=x_m...x_0|_B$ we mean by ``subwords of $n$" the function
$$W(i,j)=\sum_{k=i}^jx_kB^{k-i}$$
evaluated at $0\le i\le j\le m$. So for example if $n=7319797913$ then $W(1,5)=79791$, $W(0,0)=W(8,8)=3$, $W(7,9)=731$, and so on.

This first lemma patches a small but surprisingly troublesome hole in the following theorem. The author could find no elegant proof and hopes a more skilled reader can offer an alternative route.

\begin{lemma}
    For an integer $B\ge 3$ the smallest prime not dividing $B$ is less than $B$.
\end{lemma}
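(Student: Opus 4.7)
My plan is to exploit the identity $\gcd(B, B-1) = 1$: any prime divisor of $B - 1$ is automatically a prime that does not divide $B$, and it is automatically at most $B - 1 < B$. This turns the lemma into a two-line argument and completely avoids machinery about the distribution of primes, which is where I suspect the author's difficulty arose.

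In more detail, I would first observe that the hypothesis $B \geq 3$ guarantees $B - 1 \geq 2$, so the fundamental theorem of arithmetic supplies a prime $p$ with $p \mid B - 1$. I would then note that any common prime divisor of $B$ and $B - 1$ would divide their difference $1$, which is absurd; hence $p \nmid B$. Combining $p \leq B - 1 < B$ with $p \nmid B$ shows that the smallest prime not dividing $B$ is at most $p$, and in particular strictly less than $B$.

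The only ``obstacle'' I foresee is noticing to look at $B - 1$ in the first place. The natural first instinct is to try to upper-bound the smallest nondivisor via the primorial $\prod_{q < B} q$ together with a Bertrand- or Chebyshev-type estimate, or else to do a case split on which small primes divide $B$; both routes are workable but fiddly for small $B$. Shifting focus from $B$ to the neighbouring integer $B - 1$ sidesteps these difficulties entirely, and is probably the ``elegant proof'' the author was after.
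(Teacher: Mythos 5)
Your proof is correct, and it is genuinely different from — and considerably simpler than — the one in the paper. The paper argues by contradiction: if every prime below $B$ divided $B$, then $B \ge \prod_{p<B} p \ge 2^{\pi(B)}$, and a Chebyshev-type lower bound $\pi(B) > B/(2\log_2 B)$ forces a contradiction for $B>4$, leaving $B=3$ and $B=4$ to be checked by hand. Your observation that $B-1 \ge 2$ has a prime factor $p$, that $p \le B-1 < B$, and that $p \nmid B$ because consecutive integers are coprime, achieves the same conclusion with no analytic input and no case analysis. This is precisely the sort of ``alternative route'' the author explicitly asks for in the remark preceding the lemma; your version is strictly preferable, since it is self-contained (the paper's proof leans on an externally sourced Chebyshev estimate) and generalizes without change to any $B \ge 3$. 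The only thing worth noting is that the heavier machinery in the paper does prove a slightly different-sounding intermediate fact (that $B$ cannot be divisible by \emph{all} primes below it), but for the lemma as stated your argument is complete and nothing is lost.
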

\begin{proof}
Suppose some $B$ is divisible by all primes less than itself. Letting $\pi(n)$ count primes less than $n$ we get
$$B\ge\prod_{\substack{p\text{ prime}\\p < B}} p\ge 2^{\pi(B)}.$$

We will use the ``classical Chebyshev bound"\footnote{Taken from: \texttt{https://math.stackexchange.com/questions/1890741}}
$$\pi(B)> \frac{B}{2\log_2{B}}$$
which holds true for $B\ge 3$. Putting the two together gives
$$B\ge 2^{B/(2\log_2{B})}.$$
The right-hand-side grows faster and the expression becomes contradiction for $B>4$. Thus we have only two cases to check with bare hands $B=3$ (in which case $p=2$ will do) and $B=4$ (in which case $p=3$ will do).
\end{proof}

The following theorem was first hinted (though not given explicitly) by ``Jakob B."\footnote{See: \texttt{https://math.stackexchange.com/questions/3048309}}

\begin{theorem}
  If $n$ has all base-$B$ subwords prime then $n<B^{2p}$ where $p$ is the smallest prime not dividing $B$.
\end{theorem}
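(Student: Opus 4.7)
The plan is to track the prefix subwords $W(0,j)$ modulo $p$, where $p$ is the smallest prime not dividing $B$; by Lemma 1, $p<B$, so $p$ is a single base-$B$ digit. Write $n=x_m\cdots x_0|_B$. I first note two preliminary facts: every digit $x_k = W(k,k)$ is a prime in $\{2,\ldots,B-1\}$, so $p\mid x_k$ iff $x_k=p$; and two adjacent digits $x_k,x_{k+1}$ cannot both equal $p$, since otherwise $W(k,k+1)=p(B+1)$ would be composite.

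Next I would define $f(j)=W(0,j)\bmod p$ for $0\le j\le m$ and establish the following ``rigid collision'' lemma: if $f(i)=f(j)$ with $0\le i<j\le m$, then $j=i+1$ and $x_j=p$. The proof uses the identity
$$W(0,j)-W(0,i)=B^{i+1}\,W(i+1,j);$$
since $\gcd(B,p)=1$, the hypothesis $f(i)=f(j)$ forces $W(i+1,j)\equiv 0\pmod p$. Being a prime subword, $W(i+1,j)$ must equal $p$; and since $p<B$ is a single base-$B$ digit, the subword $W(i+1,j)$ must have length $1$, giving $j=i+1$ and $x_j=p$.

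Combining these, the sequence $f(0),f(1),\ldots,f(m)$ decomposes into maximal constant ``runs'': each run has length at most $2$ (a run of length $3$ would force two adjacent $p$-digits, which is forbidden), and distinct runs carry distinct values (any collision between non-adjacent indices is ruled out by the rigid collision lemma). Since the values lie in $\mathbb{Z}/p\mathbb{Z}$, there are at most $p$ runs, giving $m+1\le 2p$, and therefore $n<B^{m+1}\le B^{2p}$.

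The main technical obstacle is really just the single-digit reduction inside the collision lemma: one must invoke $p<B$ (Lemma 1) precisely to guarantee that any prime subword equal to $p$ occupies only one digit in base $B$. Once that is in hand, the rest is a clean run-counting argument on the sequence $(f(j))$, and the edge cases (e.g., whether $x_0=p$, which would let $f(0)=0$ appear as an extra value) do not harm the bound, since the count of runs is still capped by the size $p$ of $\mathbb{Z}/p\mathbb{Z}$.
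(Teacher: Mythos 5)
Your proof is correct and is essentially the paper's argument: both track the prefix residues $W(0,j)\bmod p$, use the identity $W(0,j)-W(0,i)=B^{i+1}W(i+1,j)$ together with primality to force any collision to produce a subword equal to $p$, and invoke Lemma 2.1 ($p<B$) to show such a subword can occupy only one digit. Your run-counting of the sequence $(f(j))$ is just a repackaging of the paper's pigeonhole step, which finds three prefixes $a<b<c$ in one residue class among $2p+1$ of them and derives the same contradiction from the two-digit subword $W(a+1,c)$.
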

\begin{proof}
  Suppose instead $n\ge B^{2p}$ -- or equivalently that $n$ has at least $2p+1$ digits in its base-$B$ expansion. Since each subword can take only one of $p$ values mod $p$ we can conclude by pigeon-holing there must be $a<b<c$ in the range $[0,2p]$ such that
  $$W(0,a)\equiv W(0,b)\equiv W(0,c)\mod p.$$
  And so it follows 
  $$W(0,c)-W(0,a)=\sum_{k=a+1}^c x_kB^k=B^{a+1}W(a+1,c)\equiv 0 \mod p.$$
  In other words $p\mid B^{c+1}W(a+1,c)$ and since $p$ by definition does not divide $B$ it must be that $p\mid W(a+1,c)$. But the subwords are assumed prime so $W(a+1,c)=p$.
  
  Next we use the fact that $c>a+1$ to bound $p$ from below:
  $$p=W(a+1,c)=\sum_{k=a+1}^c x_kB^{k-(a+1)}\ge x_cB.$$
  And since $x_c=W(c,c)$ is prime we must have $x_c>1$ so $p>B$. But we can suppose $B\ge 3$ since there are no single-digit primes in base $2$ -- and therefore no such $n$ as was presumed. The previous lemma accordingly guarantees us $p<B$; a contradiction.
  
\end{proof}

A graphical version of Theorem 2.2 was also produced:
\begin{center}
\includegraphics[scale=0.18]{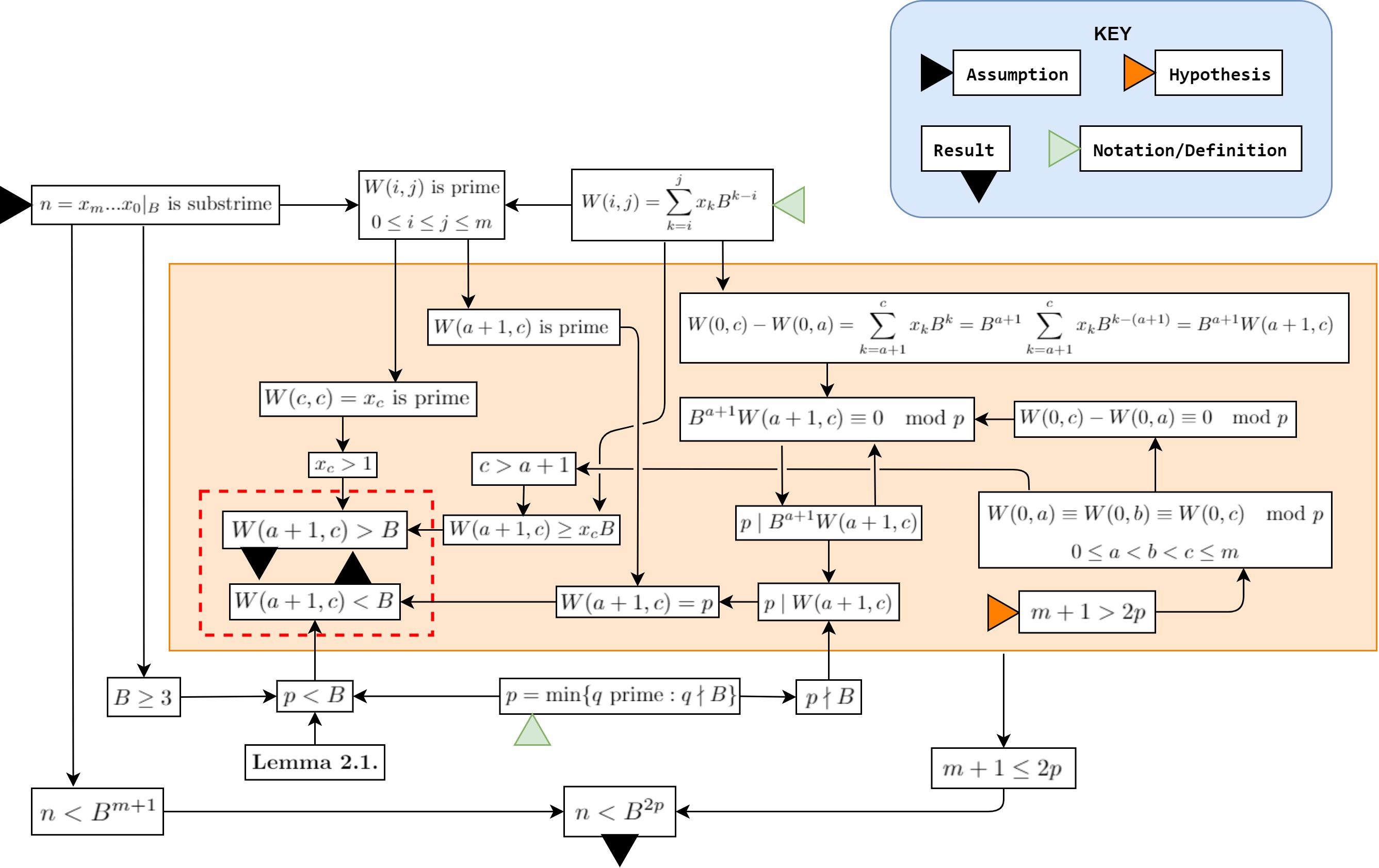}
\end{center}

The term "substrime" -- a \textit{substring-prime} -- was coined to refer to primes with all subwords prime.

The largest substrimes for the first few bases were calculated:
\begin{center}
\begin{tabular}{c|c|c}
    $B$ & largest substrime & total no. substrimes \\ \hline
    $3$ & $2$ & $1$ \\
    $4$ & $11=23|_4$ & $3$ \\
    $5$ & $67=23|_5$ & $5$ \\
    $6$ & $23=23|_6$ & $5$ \\
    $7$ & $37=23|_7$ & $7$ \\
    $8$ & $491=23|_8$ & $19$ \\
    $9$ & $47=23|_9$ & $7$ \\
    $10$ & $373$ & $9$ \\
    $11$ & $79=72|_{11}$ & $6$ \\
    $12$ & $6043=35(11)7|_{12}$ & $25$ \\
     & ... &  \\
    $24$ & $266003=(19)5(19)(11)|_{24}$ & $103$ \\
     & ... &  \\
    $30$ & $485504623=(19)(29)(11)(19)(17)(13)|_{30}$ & $161$ \\
     & ... &  \\
    $90$ & $5495055221=(83)(67)(71)(79)(11)|_{90}$ & $1455$ \\
    $91$ & $8101=(19)(2)|_{91}$ & $35$ \\
     & ... &  \\
\end{tabular}
\end{center}

\section{Prime-bounded Subwords}

We proceed to looser restrictions of subwords.

\begin{theorem}
If $n\ge B^d$ for integers $B$ and $d$ coprime, then at least one base-$B$ subword of $n$ is divisible by $d$.
\end{theorem}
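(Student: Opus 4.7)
The plan is to mirror the pigeonhole strategy used in Theorem 2.2, but in a cleaner form since we are no longer constrained by having to extract a factor $p$ equal to the subword itself. The hypothesis $n \ge B^d$ ensures that $n$ has at least $d+1$ base-$B$ digits, so we can write $n = x_m \cdots x_0|_B$ with $m \ge d$.

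Next, I would consider the $d+1$ prefix subwords $W(0,0),\, W(0,1),\, \ldots,\, W(0,d)$. There are only $d$ residue classes modulo $d$, so by the pigeonhole principle there must exist indices $0 \le a < b \le d$ with
$$W(0,a) \equiv W(0,b) \pmod{d}.$$
Exactly as in the proof of Theorem 2.2, the difference telescopes into a shifted subword:
$$W(0,b) - W(0,a) = \sum_{k=a+1}^{b} x_k B^k = B^{a+1}\, W(a+1,b),$$
and this quantity is divisible by $d$.

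Finally, I would invoke coprimality: since $\gcd(B,d)=1$ we also have $\gcd(B^{a+1},d)=1$, so $d \mid B^{a+1}\, W(a+1,b)$ forces $d \mid W(a+1,b)$. This exhibits the required subword. There is no real obstacle here — the earlier theorem had to do extra work to bound a prime $p$ from below via $p = W(a+1,c)$, which is why the pigeonhole had to produce three congruent prefixes and why the base case analysis via the lemma was needed; here we only need divisibility, so two congruent prefixes suffice and the coprimality hypothesis replaces the non-divisibility condition on $p$. The only thing worth double-checking is the digit-count bookkeeping, namely that $n \ge B^d$ really does give $d+1$ digits (equivalently $m \ge d$), which is immediate from the definition of base-$B$ expansion.
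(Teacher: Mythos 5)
Your proof is correct and follows exactly the same route as the paper's: pigeonhole on the $d+1$ prefixes $W(0,0),\ldots,W(0,d)$ modulo $d$, telescoping the difference into $B^{a+1}W(a+1,b)$, and using coprimality of $B$ and $d$ to conclude $d \mid W(a+1,b)$. No discrepancies to report.
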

\begin{proof}
If $n\ge B^d$ then $n$ has at least $d+1$ digits in its base-$B$ expansion. And since each subword can take on only one of $d$ values mod $d$ we are assured by pigeon-holing at least two subwords of the form $W(0,j)$ are equal mod $d$ for $0\le j\le d$. Say
$$W(0,a)\equiv W(0,b)\mod d$$
for $a<b$. Thus
$$W(0,b)-W(0,a)=\sum_{k=a+1}^b x_kB^k=B^{a+1}W(a+1,b)\equiv 0 \mod d.$$
In other words $d\mid B^{a+1}W(a+1,b)$. But we defined $B$ and $d$ coprime so $d\mid W(a+1,b)$.
\end{proof}

\begin{corollary}
For any $k\ge 1$ there are finitely many integers whose subwords are divisible by at most $k$ primes.
\end{corollary}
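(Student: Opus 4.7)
The plan is to reduce the corollary directly to Theorem 3.1 by, for each candidate $n$, exhibiting a modulus $d$ that Theorem 3.1 forces to divide some subword yet that cannot belong to the allowed set of primes. Fix the base $B$, and for each integer $n$ let $P(n)$ denote the set of primes dividing at least one base-$B$ subword of $n$. The hypothesis is $|P(n)| \le k$, so the union $P(n) \cup \{p \text{ prime} : p \mid B\}$ has cardinality at most $k + \omega$, where $\omega$ is the number of distinct prime divisors of $B$.

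I would then choose $q$ to be any prime lying outside that union; by a crude bound one can always take $q \le p_{k+\omega+1}$, the $(k+\omega+1)$-th prime. The key observation is that although the particular $q$ depends on $n$ through $P(n)$, the \emph{upper bound} $p_{k+\omega+1}$ depends only on $k$ and $B$. This uniformity is what will translate a per-$n$ argument into a single finite bound on $n$.

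With this $q$ in hand, Theorem 3.1 closes the argument: since $q \nmid B$ we have $\gcd(q,B)=1$, so whenever $n \ge B^q$ some base-$B$ subword of $n$ is divisible by $q$, placing $q$ in $P(n)$ and contradicting the choice of $q$. Hence every admissible $n$ satisfies $n < B^{p_{k+\omega+1}}$, a finite bound independent of $n$, and only finitely many admissible $n$ can exist.

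The argument is essentially immediate once Theorem 3.1 is in hand; the only conceptual point to handle carefully is the quantifier order — $q$ varies with $n$ but its upper bound does not, and that is exactly what lets the finiteness conclusion be uniform over all candidates. A secondary bookkeeping issue is remembering to exclude the primes dividing $B$ when choosing $q$, since Theorem 3.1 requires $\gcd(d,B)=1$; this only shifts the prime index from $p_{k+1}$ to $p_{k+\omega+1}$ and does not affect the conclusion.
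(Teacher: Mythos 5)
Your reduction rests on reading the hypothesis as $|P(n)|\le k$, where $P(n)$ is the set of primes dividing \emph{at least one} subword of $n$. That is not what the corollary assumes. The condition is that \emph{each individual subword} is divisible by at most $k$ distinct primes; the union $P(n)$ over all subwords can be arbitrarily large. For instance every subword of $373$ is divisible by at most one prime, yet $P(373)=\{3,7,37,73,373\}$ already has five elements, and the paper's example $7319797913$ (all subwords divisible by at most two primes) has $3$, $7$, $73$, $19$, $13$, $797$, \dots all in $P(n)$. Consequently there is no bound $p_{k+\omega+1}$ on a prime avoiding $P(n)\cup\{p: p\mid B\}$, and the choice of $q$ collapses: Theorem 3.1 forces some subword to be divisible by your $q$, but that subword may have only the single prime factor $q$, which contradicts nothing. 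Your argument as written proves finiteness only for the much smaller class of $n$ whose subwords are \emph{collectively} divisible by at most $k$ primes.

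The missing idea is to concentrate many prime divisors in a \emph{single} subword, which is what the paper does: it applies Theorem 3.1 with a composite modulus $d$ equal to the product of the first $k$ primes not dividing $B$ (strictly one wants the first $k+1$ such primes, so that the subword divisible by $d$ has more than $k$ distinct prime factors and the contradiction is immediate). Since $d$ is coprime to $B$, every $n\ge B^d$ has a subword divisible by all of $p_1,\dots,p_{k+1}$ simultaneously, violating the per-subword bound. Your uniformity observation (the bound on the modulus depends only on $k$ and $B$, not on $n$) is the right instinct and survives in the corrected argument; what must change is that the modulus handed to Theorem 3.1 is a product of $k+1$ primes rather than a single prime chosen to avoid $P(n)$.
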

\begin{proof}
Apply the theorem setting $d$ to the product of the first $k$ primes not dividing $B$ say $p_1,...,p_k$. Thus any such integer satisfies $n < B^{\prod p_i}$.
\end{proof}

\end{document}